\gdef\n@te#1#2{\leavevmode\vadjust{%
 {\setbox\z@\hbox to\z@{\strut#1}%
  \setbox\z@\hbox{\raise\dp\strutbox\box\z@}\ht\z@=\z@\dp\z@=\z@%
  #2\box\z@}}}
\gdef\leftnote#1{\n@te{\hss#1\quad}{}}
\gdef\rightnote#1{\n@te{\quad\kern-\leftskip#1\hss}{\moveright\hsize}}
\gdef\?{\FN@\qumark}
\gdef\qumark{\ifx\next"\DN@"##1"{\leftnote{\rm##1}}\else
 \DN@{\leftnote{\rm??}}\fi{\rm??}\next@}}
\DeclareFontFamily{OT1}{wncyr}{\hyphenchar\font45
}
\DeclareFontShape{OT1}{wncyr}{m}{n}{%
   <5> <6> <7> <8> <9> gen * wncyr
   <10> <10.95> <12> <14.4> <17.28> <20.74>  <24.88>wncyr10}{}
\DeclareFontShape{OT1}{wncyr}{m}{it}{%
   <5> <6> <7> <8> <9> gen * wncyi
   <10> <10.95> <12> <14.4> <17.28> <20.74> <24.88> wncyi10}{}
\DeclareFontShape{OT1}{wncyr}{m}{sc}{%
   <5> <6> <7> <8> <9> <10> <10.95> <12> <14.4>
   <17.28> <20.74> <24.88>wncysc10}{}
\DeclareFontShape{OT1}{wncyr}{b}{n}{%
   <5> <6> <7> <8> <9> gen * wncyb
   <10> <10.95> <12> <14.4> <17.28> <20.74> <24.88>wncyb10}{}
\def\rus{\usefont{OT1}{wncyr}{m}{n}\cyracc\fontsize{8}{10pt}\selectfont}
\theoremstyle{plain}
\newtheorem*{thmnonumber}{Theorem}
\theoremstyle{definition}
\newtheorem{nothing*}[theorem]{}
\newtheorem{subnothing*}[sub]{}
\newtheorem*{remarknonumber}{Remark}
\newtheorem*{examplenonumber}{Example}
\theoremstyle{remark}
\begin{document}




\title[Algebraic cones]
{Algebraic cones}
\author[Vladimir  L. Popov]{Vladimir  L. Popov${}^*$}
\address{Steklov Mathematical Institute,
Russian Academy of Sciences, Gubkina 8,
Moscow\\ 119991, Russia}
\email{popovvl@orc.ru}

\thanks{
 ${}^*$\,Supported by
 grants {\rus RFFI
08--01--00095}, {\rus N{SH}--1987.2008.1}, and the
program {\it Contemporary Problems of Theoretical
Mathematics} of the
Russian Academy of Sciences, Branch of Mathematics.
}

\date{August 7, 2009}



\maketitle

\begin{abstract}
A characterization of algebraic cones in terms of actions of the one-dimensional multiplicative algebraic monoid ${\bf M}_{m}$ and the algebraic group ${\bf G}_{\rm m}$
are given.
\end{abstract}

\vskip 10mm

This note answers a question of K. Adjamagbo asked in \cite{A}.

Below all algebraic varieties are taken over an algebraically closed field $k$. 

An irreducible algebraic variety $X$ 
is called {\it a cone} 
if $X$ is affine and its 
coordinate algebra 
$k[X]$ admits a connected ${\bf N}$-grading: 
\begin{equation}\label{grad}
k[X]=\bigoplus_{d\in {\bf N}} k[X]_d,\qquad k[X]_0=k.
\end{equation} 

Let ${\bf M}_{\rm m}$ be the multiplicative algebraic monoid  whose underlying variety is 
the affine line ${\bf A}^1$ and the multiplication $\mu\colon {\bf A}^1\times {\bf A}^1\to {\bf A}^1$ in defined by that in $k$, i.e., $\mu^*(T)=T\otimes T$ where $T$ is the standard coordinate function on ${\bf A}^1$. The group of 
units of ${\bf M}_{\rm m}$ is ${\bf M}_{\rm m}\setminus \{0\}={\bf G}_{\rm m}$. We put 
\begin{equation}\label{chii}
\chi^{}_d\colon {\bf G}_{\rm m}\longrightarrow {\bf G}_{\rm m},\qquad t\mapsto t^d,
\end{equation} 

One says that ${\bf M}_{\rm m}$ {\it acts} on a variety $Y$ if a morphism $\alpha\colon {\bf M}_{\rm m}\times Y\to Y$ is given, such that $\alpha(g, \alpha(h, y))=\alpha (gh, y)$ and $\alpha(1, y)=y$ for all $g, h, \in {\bf M}_{\rm m}$, $y\in Y$. We write 
$g(y):=\alpha(g, y)$. The restriction of $\alpha$ to ${\bf G}_{\rm m}\times Y$ is the usual group action of ${\bf G}_{\rm m}$ on $Y$. The set 
${\bf M}_{\rm m}(y):=\{g(y)\mid g\in {\bf M}_{\rm m}\}$ is called 
an ${\bf M}_{m}$-{\it orbit} of $y$
(warning: different ${\bf M}_{m}$-orbits may have a nonempty intersection). If ${\bf M}_{\rm m}(y)=y$, then $y$ is called a {\it fixed point} of the action.

\begin{thmnonumber} 
Let $X$ be an irreducible algebraic variety. 
Consider the properties:
\begin{enumerate}
\item[\rm(i)] 
$X$ is a cone;
\item[\rm(ii)] there is an action of $\;{\bf M}_{\rm m}$ on 
$X$ with a unique fixed point;
\item[\rm(iii)] there is an action of $\;{\bf G}_{\rm m}$ on 
$X$ with a 
fixed point 
and 
without other 
closed orbits.
\end{enumerate}
Then {\rm (i)}\!$\Rightarrow$\!{\rm (ii)}\!$\Rightarrow$\!{\rm (iii)} and, 
if $X$ is normal, {\rm (iii)}\!$\Rightarrow$\!{\rm (i)}.
\end{thmnonumber}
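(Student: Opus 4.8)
The plan is to prove the three implications separately; normality is needed only for the last one, where it enters through a single structural input.

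\textbf{(i)$\Rightarrow$(ii).} I would use the standard dictionary between connected $\mathbf{N}$-gradings of the finitely generated domain $A=k[X]$ and actions of $\mathbf{M}_{\rm m}$ on $X$. Given \eqref{grad}, write each $f\in A$ as $f=\sum_d f_d$ with $f_d\in A_d$ and set $\alpha^*(f)=\sum_d T^d\otimes f_d\in k[T]\otimes A$. One checks that $\alpha^*$ is a homomorphism of $k$-algebras, that $(\mu^*\otimes\mathrm{id})\circ\alpha^*=(\mathrm{id}\otimes\alpha^*)\circ\alpha^*$, and that substituting $T=1$ in $\alpha^*(f)$ returns $f$; these are exactly the conditions for the corresponding morphism $\alpha$ to be an action of $\mathbf{M}_{\rm m}$. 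Evaluating $\alpha^*(f)$ at a point $(g,y)$ and letting $T(g)$ vary over $k$ shows that $y$ is a fixed point iff $f_d(y)=0$ for all $f$ and all $d\geq1$, i.e. iff $y$ lies in the zero set of the ideal $\bigoplus_{d\geq1}A_d$. Since $A/\bigoplus_{d\geq1}A_d=A_0=k$, that zero set is a single point, so the action has a unique fixed point.

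\textbf{(ii)$\Rightarrow$(iii).} Restrict the given $\mathbf{M}_{\rm m}$-action $\alpha$, with unique fixed point $x_0$, to $\mathbf{G}_{\rm m}$; then $x_0$ is still fixed. The key observation is that $\alpha(0,y)$ is an $\mathbf{M}_{\rm m}$-fixed point for every $y$: indeed $\alpha(g,\alpha(0,y))=\alpha(g0,y)=\alpha(0,y)$ for all $g\in\mathbf{M}_{\rm m}$, because $g0=0$ in $\mathbf{M}_{\rm m}$; hence $\alpha(0,y)=x_0$. On the other hand the morphism $\mathbf{A}^1\to X$, $t\mapsto\alpha(t,y)$, sends the dense subset $\mathbf{G}_{\rm m}$ into the orbit $\mathbf{G}_{\rm m}(y)$, so it sends $0$ into $\overline{\mathbf{G}_{\rm m}(y)}$. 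Thus $x_0\in\overline{\mathbf{G}_{\rm m}(y)}$ for every $y$, so every closed $\mathbf{G}_{\rm m}$-orbit contains $x_0$ and therefore equals $\{x_0\}$; this exhibits an action of $\mathbf{G}_{\rm m}$ with a fixed point and without other closed orbits.

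\textbf{(iii)$\Rightarrow$(i) for normal $X$.} Here I would argue in three steps. First, $X$ is affine: by Sumihiro's theorem $x_0$ has a $\mathbf{G}_{\rm m}$-stable affine open neighbourhood $U$, and if $X\setminus U\neq\varnothing$ then this $\mathbf{G}_{\rm m}$-stable closed subset would contain a closed $\mathbf{G}_{\rm m}$-orbit (any variety with a group action does, e.g. an orbit of minimal dimension), which would be closed in $X$ and distinct from $\{x_0\}$, contradicting (iii); hence $X=U$. Write $A=k[X]$ with the $\mathbf{Z}$-grading induced by the $\mathbf{G}_{\rm m}$-action. Second, $A_0=k$: the invariant subalgebra $A_0=A^{\mathbf{G}_{\rm m}}$ is finitely generated since $\mathbf{G}_{\rm m}$ is reductive, the closed points of $\mathrm{Spec}\,A_0$ correspond bijectively to the closed orbits of $X$ (so there is just one), and a finitely generated $k$-domain with a single closed point equals $k$. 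Third, the grading is one-sided: if $A_a\neq0\neq A_{-b}$ with $a,b>0$, choose nonzero $f\in A_a$ and $h\in A_{-b}$; then $f^{b}h^{a}\in A_0\setminus\{0\}$, and for any nonzero homogeneous $\varphi\in A_m$ a suitable monomial $\varphi^{p}h^{q}$ (if $m>0$) or $\varphi^{p}f^{q}$ (if $m<0$) with $p,q>0$ lies in $A_0\setminus\{0\}=k^{\times}$, so $f$ and every such $\varphi$ are algebraic over $k(h)$. Since $A$ is generated by finitely many homogeneous elements, $k(X)$ is algebraic over $k(h)$, forcing $\dim X=1$; but a normal, hence smooth, affine curve with nontrivial $\mathbf{G}_{\rm m}$-action is $\mathbf{A}^1$ or $\mathbf{G}_{\rm m}$ with the standard action — the former has a one-sided grading, the latter no fixed point — a contradiction. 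Hence the grading of $A$ is supported on $\mathbf{N}$ or on $-\mathbf{N}$, and, reindexing if necessary and using $A_0=k$, we obtain a connected $\mathbf{N}$-grading \eqref{grad}, i.e. $X$ is a cone.

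I expect the main obstacle to be the affineness of $X$ in the last implication: it is the one place where an elementary orbit-dimension argument does not suffice and one must invoke a genuine structure theorem (Sumihiro's, on $\mathbf{G}_{\rm m}$-stable affine neighbourhoods of a fixed point on a normal variety). Once affineness is available, the remaining steps reduce to routine facts about $\mathbf{Z}$-graded domains, the invariant theory of $\mathbf{G}_{\rm m}$, and the classification of $\mathbf{G}_{\rm m}$-actions on affine curves.
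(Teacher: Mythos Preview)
Your proof is correct, and each implication is handled by a route different from the paper's. For (i)$\Rightarrow$(ii) you write down the ${\bf M}_{\rm m}$-coaction $\alpha^*$ directly from the grading, whereas the paper first equivariantly embeds $X$ into a linear ${\bf G}_{\rm m}$-module $V$ and then observes that the nonnegative weights let one extend the action to ${\bf M}_{\rm m}$ by sending $0$ to the origin of $V$; your version is more elementary and avoids the embedding theorem. For (ii)$\Rightarrow$(iii) your observation that $\alpha(0,y)$ is always a fixed point (because $g\cdot 0=0$ in ${\bf M}_{\rm m}$), hence equals $x_0$ and lies in $\overline{{\bf G}_{\rm m}(y)}$, is considerably cleaner than the paper's argument, which assumes a closed one-dimensional ${\bf G}_{\rm m}$-orbit and derives a contradiction from a fiber-cardinality count for the orbit map ${\bf A}^1\to{\bf G}_{\rm m}(y)$, citing a result of Grothendieck. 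For (iii)$\Rightarrow$(i) both proofs invoke Sumihiro's theorem for affineness and obtain $A_0=k$; to rule out a two-sided grading the paper restricts to a single orbit closure $\overline{{\bf G}_{\rm m}(y)}$ and manufactures a nonzero invariant $pq$ vanishing at $x$, while you show instead that a two-sided grading forces $\dim X=1$ and then invoke the classification of ${\bf G}_{\rm m}$-actions on smooth affine curves. Your detour here works but is longer than necessary: once you have $f^{b}h^{a}\in A_0\setminus\{0\}=k^\times$ with $f\in A_a$, $h\in A_{-b}$ and $a,b>0$, you could simply note that $f,h$ lie in the graded maximal ideal $\bigoplus_{d\neq 0}A_d=\mathfrak m_{x_0}$, so $f^{b}h^{a}$ vanishes at $x_0$, an immediate contradiction.
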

\begin{proof} We may assume that  $\dim X>0$.


(i)\!$\Rightarrow$\!(ii) Let $X$ be a cone. Consider a grading \eqref{grad}. 
Then formula $t\cdot f:=t^df$ for $t\in {\bf G}_m$, $f\in k[X]_d$, defines an action of
${\bf G}_m$ on $k[X]$ by $k$-algebra automorphisms. In turn, it defines an (algebraic) action of ${\bf G}_m$ on $X$. 
As grading \eqref{grad} is connected, 
\begin{equation}\label{fixedd}
k[X]^{{\bf G}_{\rm m}}=k.
\end{equation} 

Since $k[X]^{{\bf G}_{\rm m}}$ separates closed ${\bf G}_{\rm m}$-orbits (see \cite[Cor.\;A1.3]{MF}),
from \eqref{fixedd} and \cite[Cor.\;I.1.8]{Bor} we deduce that there is a unique such orbit $O$. 
The coordinate algebra $k[O]$ of $O$ does not contain ${\bf G}_{\rm m}$-stable proper ideals
as the zero set of such an ideal would be 
a proper ${\bf G}_{\rm m}$-stable subset of $O$. Hence, as closedness of $O$ implies surjectivity of the map $k[X]\to k[O]$, $f\mapsto f|_O$, the ${\bf G}_{\rm m}$-stable ideal $\bigoplus_{d>0} k[X]_d$ vanishes on $O$.
Therefore, $O$ is a single point.

By the embedding theorem \cite[Theorem 1.5]{popov-vinberg}  we may assume that $X$ is a 
closed ${\bf G}_{\rm m}$-stable subset of a finite dimensional ${\bf G}_{\rm m}$-module $V$ and $X$ is not contained in its proper 
submodule.
But the set of zeros of all linear functions on $V$ that vanish on $X$ is a ${\bf G}_{\rm m}$-submodule containing $X$. Hence 
we have the embedding 
 $V^*\hookrightarrow k[X]$, $\ell\mapsto \ell|_X$,
of 
${\bf G}_{\rm m}$-modules. As the weights of $V$ are the inverses of that of $V^*$,  
by \eqref{fixedd} this yields that the origin $0_V$ is the unique ${\bf G}_{\rm m}$-fixed point in $V$; in particular, $O=0_V$.
By \eqref{grad} this also yields that,  for every ${\bf G}_{\rm m}$-weight $\chi^{}_d$ of $V^*$, we have $d\in {\bf N}$. Hence by \eqref{chii} there are
$d_1,\ldots, d_n\in {\bf N}$ and a basis $e_1,\ldots, e_n$ of $V$ such that 
$t(e_i)=t^{d_i}e_i$ for every $t\in {\bf G}_{\rm m}$ and $i$. This shows that the action of ${\bf G}_m$ on $V$ extends to the action of 
${\bf M}_{\rm m}$ on $V$ by putting 
$0(v)=0_V$ for every $v\in V$. As  ${\bf M}_{\rm m}$ is the closure of ${\bf G}_{\rm m}$
and $X$ is ${\bf G}_{\rm m}$-stable and closed, $X$ is ${\bf M}_{\rm m}$-stable as well. By the same reason, $0_V$ is a fixed point of ${\bf M}_{\rm m}$. There are no other such points because $V^{{\bf G}_{\rm m}}=\{0_V\}$.  

\vskip 1.5mm

(ii)\!$\Rightarrow$\!(iii) Assume that 
${\bf M}_m$ acts on an irreducible 
variety $X$ with a unique fixed point $x$. 
Restrict this action to ${\bf G}_m$.
Take a point $y\in X$.
If the ${\bf G}_m$-orbit ${\bf G}_m(y)$ is closed in $X$, then ${\bf G}_m(y)={\bf M}_m(y)$  since ${\bf G}_m$ is dense in ${\bf M}_m$.

We claim that 
then ${\bf G}_m(y)$ is a single point, whence
${\bf G}_m(y)=x$ because of the uniqueness of $x$.
Indeed, assume the contrary, i.e.,
$\dim {\bf G}_m(y)=1$.  The orbit map
$\varphi^{}_y\colon {\bf M}_{\rm m}\to {\bf M}_m(y)$,
\  $g\mapsto g(y)$,
 is 
a surjective morphism of one-dimensional smooth  varieties
 such that every fiber is finite 
and, for every point $z\in {\bf M}_{\rm m}(y)$, 
$z\neq \varphi^{}_y(0)$, the cardinality of 
$\varphi^{-1}_y(z)$ is equal to 
the order $s$ of the ${\bf G}_{\rm m}$-stabilizer
of $y$ while the cardinality of $\varphi^{-1}_y(\varphi^{}_y(0))$ is $s+1$. By \cite[Sect.\;2, Cor.\;2]{G}
this is impossible, a contradiction.
Thus, $x$ is a unique closed ${\bf G}_m$-orbit.

\vskip 1.5mm


(iii)\!$\Rightarrow$\!(i) Now assume that  ${\bf G}_{\rm m}$ acts on $X$ with a unique fixed point $x$ and without other closed orbits. Then, for every point $y\in X$, $y\neq x$,
the closure $\overline{{\bf G}_m(y)}$ of the orbit ${\bf G}_m(y)$ is one-dimensional and 
\begin{equation}\label{x}
x\in\overline{{\bf G}_m(y)}. 
\end{equation}

Assume further that $X$ is normal. Then by Sumihiro's theorem \cite[Cor.\;2 of Lemma\;8]{S} there is a ${\bf G}_{\rm m}$-stable affine open neighborhood $U$ of $x$. We claim that $X=U$. Indeed, if not, then $X\setminus U$ is a nonempty ${\bf G}_{\rm m}$-stable closed subset in $X$ and \eqref{x} is impossible for $y\in X\setminus U$, a contradiction. 


Thus, $X=U$, hence $X$ is affine.

As elements of 
$k[X]^{{\bf G}_{\rm m}}$ are constant on ${\bf G}_{\rm m}$-orbits, 
\eqref{x} implies that $f(y)=f(x)$ for every $f\in k[X]^{{\bf G}_{\rm m}}$ and $y\in X$;
whence \eqref{fixedd} holds.

Now let $k[X]_{d}$ be the $\chi^{}_d$-isotypic component 
of the ${\bf G}_{\rm m}$-module $k[V]$. 
Then 
\begin{equation}\label{Z}
k[X]=\bigoplus_{d\in{\bf Z}}k[X]_d
\end{equation}
is a ${\bf Z}$-grading of the $k$-algebra 
$k[X]$. By \eqref{fixedd} it is connected, i.e., 
$k[X]_0=k$. 

We claim that there are no integers $d_1>0$ and $d_2<0$ such that $k[X]_{d_i}\neq 0$ for $i=1, 2$. Indeed, assume the contrary. Then there is a point $y\in X$ such that $k[X]_{d_i}$, $i=1, 2$, is not in the kernel of $k[X]\to k[\overline{{\bf G}_m(y)}]$, $f\mapsto f|_{\overline{{\bf G}_m(y)}}$. Hence, for every $i=1, 2$, the $\chi_{d_i}$-isotypic component of 
the ${\bf G}_m$-module $k[\overline{{\bf G}_m(y)}]$ 
is nonzero. This implies that there is an integer $d\neq 0$ such that 
the ${\bf G}_m$-stable maximal ideal $\{f\in k[\overline{{\bf G}_m(y)}]\mid f(x)=0\}$ of 
$k[\overline{{\bf G}_m(y)}]$ 
has the nonzero $\chi^{}_d$- and $\chi^{-1}_d$-isotypic components.
Let 
$p$ and $q$ be the nonzero elements of resp. the first and second of them. Then 
$pq$ is constant on ${\bf G}_m(y)$ and hence on 
$\overline{{\bf G}_m(y)}$. As $pq(x)=0$, this means that $pq=0$ contrary to the irreducibility of
$\overline{{\bf G}_m(y)}$, a contradiction.

Thus,  $k[X]_d=0$ in \eqref{Z} either for all negative or for all positive $d$'s. Repla\-cing, if necessary, the action of ${\bf G}_m$ on $X$ by  $g\cdot y:=g^{-1}(y)$, we may assume that the first possibility is realized, i.e., \eqref{grad} holds. This completes the proof.  
\quad $\square$
\renewcommand{\qed}{}\end{proof}

\begin{remarknonumber} The following example shows that, in general, without the assumption of normality of $X$ the implications (iii)\!$\Rightarrow$\!(i) and 
(ii)\!$\Rightarrow$\!(i)
do not hold.
\begin{examplenonumber}
Let $X$ be the image of 
the morphism
\begin{gather*}
\nu\colon {\bf P}^1\longrightarrow {\bf P}^2,\hskip 2mm (a_0:a_1)\mapsto (p^3:q^2t-p^3:q^3-qp^2),
\quad p=a_1-a_0,\; q=a_1+a_0.
\end{gather*}
$X$ is the projective plane cubic with an ordinary double point $O=(1:0:0)$ and ${\bf P}^1\to X$, $x\mapsto \nu(x)$, is the normalization map. Formula $\alpha(t, (a_0:a_1))=(a_0:ta_1)$ defines an action of ${\bf G}_{\rm m}$ on ${\bf P}^1$ that descends to $X$ 
by means of $\nu$, see \cite{P}. For this action, $O$ is a unique fixed point and $X\setminus O$ is a ${\bf G}_{\rm m}$-orbit. This action extends to the one of ${\bf M}_{\rm m}$ by putting $0(x)=O$ for every $x\in X$.
\end{examplenonumber} 
\end{remarknonumber}


\begin{thebibliography}{XX}

\bibitem[A]{A} {\sc  K. Adjamagbo}, {\it Letter to V. L. Popov}, July 9, 2009. 

\bibitem[Bor]{Bor} {\sc  A.~Borel}, \emph{Linear Algebraic
Groups},
 2nd enlarged ed., Graduate Texts in Mathematics,
  Vol. 126, Sprin\-ger-Verlag, 1991.
  
  \bibitem[G]{G} {\sc A. Grothendieck}, 
{\it Compl\'ements de g\'eom\'etrie
alg\'ebrique. Espaces de transformations}, in: {\it S\'eminaire C. Chevalley, $1956$--$1958$. Classification de groupes de Lie alg\'ebriques}, Vol.\,1, Expos\'e no. 5, 
Secr. math. ENS, Paris, 1958. 

\bibitem[MF]{MF} {\sc D. Mumford, J. Fogarty}, {\it Geometric Invariant Theory}, Ergebnisse der Mathematik und ihrer Grenzgebiete, Band 34, Springer-Verlag, Berlin, 1982. 


\bibitem[P]{P} {\sc V. L. Popov}, {\it Algebraic curves with an infinite
automorphism group},
Math.
Notes {\bf 23} (1978), 102--108.




\bibitem[PV]{popov-vinberg}
{\sc  V.~L.~Popov, E.~B.~Vinberg}, {\it Invariant
Theory}, in: \emph{Algebraic Geometry} IV,
Encyclopaedia of Mathematical Sciences, Vol. 55,
Springer-Verlag, Berlin, 1994, pp. 123--284.

\bibitem[S]{S} {\sc Sumihiro},
{\it Equivariant completion}, J. Math. Kyoto Univ.
{\bf 14} (1974), 1--28. 

 

\end{thebibliography}
\end{document}